\newtheorem{theorem}{Theorem}[section]
\newtheorem{lemma}[theorem]{Lemma}
\theoremstyle{definition}
\newtheorem{remark}{Remark}[section]
\numberwithin{equation}{section}
\begin{document}

\title[H\"{o}lder property of the resolvent of a monotone operator in Banach spaces ]
{{\bf H\"{o}lder property of the resolvent of a monotone operator in Banach spaces}}

\author{ Changchi Huang, Jigen Peng, Yuchao Tang$^*$ }

\address{ Changchi Huang:  School of Mathematics And Information Science, Guangzhou University,
Guangzhou, 510006, China}
\email{944629901@qq.com\;\;(C. Huang)}
\address{ Jigen Peng:   School of Mathematics And Information Science, Guangzhou University,
Guangzhou, 510006, China}
\email{jgpeng@gzhu.edu.cn\;\;(J.P. Peng)}
\address{ Yuchao Tang$^*$:   School of Mathematics And Information Science, Guangzhou University,
Guangzhou, 510006, China}
\email{yctang@gzhu.edu.cn\;\;(Y.C. Tang)}

\thanks{$^*$ The corresponding author. This work was
supported by the National Natural Science Foundations of China (12031003, 12571558, 12571491), the Guangzhou Education Scientific Research Project 2024 (202315829) and the Jiangxi Provincial Natural Science Foundation (20224ACB211004), and the Postdoctoral Startup Foundation of Guangdong Province (62402153). }


\begin{abstract}
  Let $E$ be a Banach space, and let \(J: E \to E^{*}\) denote the normalized duality mapping. In this paper, we establish an upper bound for \( \|Jx - Jy\| \) in \( q \)-uniformly smooth Banach spaces, where the bound is expressed in terms of a relatively simple function of \(\|x - y\|\).  Subsequently, we derive the H\"{o}lder property of mappings of firmly nonexpansive type in 2-uniformly convex and $q$-uniformly smooth Banach spaces ($1<q\leq 2$). As an application,
  we apply this result to the resolvent of a monotone operator in Banach spaces.
\end{abstract}

\keywords{ H\"{o}lder property; firmly nonexpansive mapping; resolvent; monotone operator; Banach space}

\subjclass[2010]{Primary 47H10; Secondary 47H05; 46B20}

\maketitle

\section{Introduction}

Let \( E \) be a smooth Banach space and \(C\) be a nonempty subset of $E$. Kohsaka and Takahashi \cite{KT} introduced the definition of a firmly nonexpansive type mapping \( T: C \to E \), which is defined by
\begin{align}\label{fnt}
  \langle Tx - Ty, JTx - JTy \rangle \leq \langle Tx - Ty, Jx - Jy \rangle,~~\quad \forall x,y\in C,
\end{align}
where \(J: E \to E^{*}\) is the normalized duality mapping. They \cite{KT} proved that if \( E \) is a smooth, strictly convex, and reflexive Banach space, \( A \subset E \times E^* \) be a monotone operator satisfying $D(A) \subset C \subset J^{-1}R(J + rA)$ for all \( r > 0 \), and \( r > 0 \), then the resolvent \( J_{r} = (J + rA)^{-1}J \) of \( A \) is of firmly nonexpansive type (see Lemma \ref{risfnt}). In fact, mappings of firmly nonexpansive type reduce to firmly nonexpansive mappings in Hilbert spaces, since the duality mapping $J$ in a Hilbert space coincides with the identity operator $I$ (see \cite{BC}).

From Koji and Kohsaka \cite{AK}, we can get that if \( E \) be a real smooth and 2-uniformly convex Banach space, then there exists \( \mu \geq 2 \) such that
\begin{align}\label{J inequa1}
  \frac{1}{\mu}\| x - y\| ^{2} \leq \langle x - y, Jx - Jy \rangle
\end{align}
for all \( x, y \in E \) (see Lemma \ref{phiinequ1} and Lemma \ref{main inequa1}). Substituting \(x\) and \(y\) with \(Tx\) and \(Ty\) respectively in formula (\ref{J inequa1}), we obtain that there exists a constant \( \mu \geq 2 \) such that
\begin{align}\label{J inequa2}
  \langle Tx - Ty, JTx - JTy \rangle \geq \frac{1}{\mu} \|Tx - Ty\|^2.
\end{align}
Moreover, since \( \langle Tx - Ty, Jx - Jy \rangle \leq \|Tx - Ty\| \|Jx - Jy\| \),  and together with formula (\ref{J inequa2}), we can derive from inequality (\ref{fnt}) that:

\begin{align}\label{JT inequ}
  \frac{1}{\mu}\|Tx - Ty\| \leq \|Jx - Jy\|.
\end{align}

As far as we know, the inequality of (\ref{JT inequ}) for mappings of firmly nonexpansive type provides only a coarse estimate. If we can establish an upper bound for \(\|Jx - Jy\|\) in terms of a relatively simple function of $\|x-y\|$, we can obtain more precise information about the mapping \(T\). In this paper, we derive such an estimate for \( \|Jx - Jy\| \) in \( q \)-uniformly smooth and 2-uniformly convex Banach spaces (It is known that all the Lebesgue spaces $L_q$ are $q$-uniformly smooth and $2$-uniformly convex whenever $1<q\leq 2.$). The purpose of this result is to establish the H\"{o}lder property of mappings of firmly nonexpansive type. Specifically, we show that for every $R>0$ and for all$x,y \in E$ with $\|x\| \leq R, \|y\|\leq R$, there exist a constant \( L \) and \( 0 < \alpha \leq 1 \) such that \( \|Tx - Ty\| \leq L\|x - y\|^{\alpha} \). To the best of our knowledge, this result is new. Finally, we apply this result to the resolvent of a monotone operator in Banach spaces.

The remainder of the paper is organized as follows. Section 2 reviews preliminary concepts in Banach space geometry, such as the normalized duality mappings and $2$-uniformly smooth Banach spaces. Section 3 presents the main results of the paper. 

\section{Preliminaries}
Let $E$ be a Banach space, and let \(J: E \to E^{*}\) be the normalized duality mapping, which is defined by
\begin{align}\label{dual map}
  J x=\left\{x^{*} \in E^{*}:\left< x, x^{*}\right>=\| x\| ^{2}=\left\| x^{*}\right\| ^{2}\right\}
\end{align}
for all \(x \in E\). A Banach space $E$ is said to be smooth if for every $x\neq 0$ there is a unique supporting functional at $x$, i.e., a unique functional $x^* \in E^*$ such that $\|x^*\| = 1$ and $ \langle x^*, x\rangle = \|x\|$. This supporting functional is denoted by $j(x)$. If $E$ is smooth, then $J(x) = \|x\| j(x)$.

The modulus of convexity of \(E\), is
\[\delta_{E}(\varepsilon)=\inf \left\{1-\frac{\| x+y\| }{2}:\| x\| =\| y\| =1,\| x-y\| =\varepsilon\right\},\]
where the infimum can also be taken over all $\|x\|\leq 1, \|y\|\leq 1$. The
modulus of smoothness is
\[\rho_{E}(\tau)=sup \left\{\frac{\| x+\tau y\| +\| x-\tau y\| }{2}-1:\| x\| =\| y\| =1\right\},\]
where the supremum can also be taken over all $\|x\|\leq 1, \|y\|\leq 1$.

A Banach space $E$ is called uniformly convex if \(\delta_{E}(\varepsilon)>0\) for all \(\varepsilon>0\), and is called $q$-uniformly convex \((2 \leq q<\infty)\) ) if there exists a constant \(C>0\) such that \(\delta_{E}(\varepsilon) \geq C \varepsilon^{q}\) for all \(\varepsilon>0\). $E$ is called uniformly smooth if \(\rho_{E}(\tau) / \tau \to 0\) as \(\tau \to 0\) , and is called p-uniformly smooth \((1<p \leq 2)\) if there exists a constant \(K>0\) such that \(\rho_{E}(\tau) \leq K \tau^{p}\) for all \(\tau>0\).

All symbols used above are standard and can be found in Megginson's book \cite{MRE} , a classic monograph in Banach space theory.


Let $E$ be a smooth Banach space. Following Alber \cite{AY} and Kamimura and Takahashi \cite{KT2}, let \(\phi: E \times E \to \mathbb{R}\) be the mapping defined by
\begin{align}\label{phi}
  \phi(x, y)=\| x\|^{2} - 2\langle x, Jy\rangle + \| y\|^{2}
\end{align}
for all \(x, y \in X\). Note that $\phi$ is the Bregman distance corresponding to \(\|\cdot\|^{2}\).

\begin{lemma}\label{phiequ}\cite{AY,AK}
  Let $E$ be a real smooth Banach space. Then the following identities hold:
   \begin{align}
     \phi(x,y) + \phi(y,x) = 2\langle x-y, Jx-Jy\rangle, \forall x,y\in E.
   \end{align}
\end{lemma}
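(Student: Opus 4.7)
The plan is straightforward: the identity is a direct algebraic consequence of the definition of $\phi$ together with the defining property of the normalized duality mapping, with smoothness of $E$ entering only to ensure that $Jx$ and $Jy$ are single elements of $E^{*}$ so that $\phi(x,y)$ is unambiguous.

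First I will expand the left-hand side. From the definition $\phi(u,v) = \|u\|^{2} - 2\langle u, Jv\rangle + \|v\|^{2}$, adding $\phi(x,y)$ and $\phi(y,x)$ immediately yields
\[
\phi(x,y) + \phi(y,x) \;=\; 2\|x\|^{2} + 2\|y\|^{2} - 2\langle x, Jy\rangle - 2\langle y, Jx\rangle.
\]

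Next I will expand the right-hand side using bilinearity of the duality pairing:
\[
2\langle x - y,\, Jx - Jy\rangle \;=\; 2\langle x, Jx\rangle - 2\langle x, Jy\rangle - 2\langle y, Jx\rangle + 2\langle y, Jy\rangle.
\]
Invoking the defining identity $\langle z, Jz\rangle = \|z\|^{2}$ from \eqref{dual map} at $z = x$ and $z = y$ replaces the two outer terms by $2\|x\|^{2} + 2\|y\|^{2}$, producing exactly the expression obtained above for $\phi(x,y)+\phi(y,x)$.

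Since the entire argument reduces to bilinearity of the pairing and the identity $\langle z, Jz\rangle = \|z\|^{2}$, I do not anticipate any genuine obstacle. The only point worth flagging is that smoothness of $E$ is used implicitly: it guarantees that $J$ is single-valued, so the symbol $Jx$ denotes a specific element of $E^{*}$ and $\phi(x,y)$ is a well-defined real number rather than a set-valued expression.
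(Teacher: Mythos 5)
Your computation is correct: expanding $\phi(x,y)+\phi(y,x)$ from the definition and expanding $2\langle x-y, Jx-Jy\rangle$ by bilinearity, then using $\langle z,Jz\rangle=\|z\|^{2}$, gives the identity exactly as you describe, and your remark that smoothness only serves to make $J$ single-valued is also accurate. The paper states this lemma with citations to \cite{AY,AK} and gives no proof of its own, but your argument is precisely the standard direct verification underlying the cited result.
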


\begin{lemma}\label{phiinequ1}\cite{AK}
  Suppose that Banach space $E$ is 2-uniformly convex and smooth. Then there exists $\mu \geq 1$ such that
\[\frac{1}{\mu}\| x-y\|^{2} \leq \phi(x, y) \]
for all \(x, y \in E\).
\end{lemma}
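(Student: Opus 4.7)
The plan is to recognize $\phi(x,y)$ as the Bregman distance associated with the squared-norm functional $f(x) := \|x\|^{2}$. Since $E$ is smooth, $f$ is G\^{a}teaux differentiable with $f'(x) = 2Jx$, so
\[\phi(x,y) = f(x) - f(y) - \langle x - y,\, f'(y)\rangle,\]
and the lemma reduces to establishing a quadratic lower bound
\[f(x) \geq f(y) + \langle x - y,\, f'(y)\rangle + \kappa\|x-y\|^{2}\]
for some $\kappa \in (0,1]$; one may then set $\mu = 1/\kappa \geq 1$.

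The first step will be to convert the 2-uniform convexity hypothesis $\delta_{E}(\varepsilon) \geq C\varepsilon^{2}$ into a midpoint-type strong convexity inequality
\[\|\lambda x + (1-\lambda)y\|^{2} \leq \lambda\|x\|^{2} + (1-\lambda)\|y\|^{2} - \kappa\,\lambda(1-\lambda)\|x-y\|^{2},\]
valid for all $x, y \in E$ and $\lambda \in [0,1]$. The idea is to rescale $x$ and $y$ by a common factor (for instance the quadratic mean $M := \bigl(\tfrac{1}{2}(\|x\|^{2} + \|y\|^{2})\bigr)^{1/2}$), apply the definition of $\delta_{E}$ to the rescaled pair, and then clear the rescalings using convexity of $t \mapsto t^{2}$.

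With the midpoint inequality in hand, I would specialize to $x_{\lambda} := y + \lambda(x-y)$ and rearrange to obtain
\[\frac{\|x_{\lambda}\|^{2} - \|y\|^{2}}{\lambda} \leq \|x\|^{2} - \|y\|^{2} - \kappa(1-\lambda)\|x-y\|^{2}.\]
Letting $\lambda \to 0^{+}$ and invoking smoothness --- which guarantees that $\|\cdot\|^{2}$ has G\^{a}teaux derivative $2J$ --- the left-hand side tends to $2\langle x-y, Jy\rangle$, and rearranging yields precisely $\phi(x,y) \geq \kappa\|x-y\|^{2}$.

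The main obstacle I expect is the homogenization in the second step: $\delta_{E}$ only constrains pairs of unit vectors, so producing a single constant $\kappa$ that works for arbitrary $x, y \in E$ of possibly disparate norms requires careful bookkeeping. A convenient intermediate lemma is the equivalent characterization $2\|x\|^{2} + 2\|y\|^{2} \geq \|x+y\|^{2} + c\|x-y\|^{2}$ for all $x, y \in E$, which follows from $\delta_{E}(\varepsilon) \geq C\varepsilon^{2}$ by a scaling argument and directly gives the $\lambda = \tfrac{1}{2}$ case of the midpoint inequality; the general $\lambda \in [0,1]$ then follows by iteration on dyadic rationals and continuity. Once the homogenization is pinned down, the remainder of the argument is essentially a one-line rearrangement together with the existence of the directional derivative furnished by smoothness.
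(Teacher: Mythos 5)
The paper offers no proof of this lemma at all --- it is imported verbatim from [AK] --- so you are supplying an argument where the authors supply a citation. Your overall architecture is the standard one and is correct in outline: $\phi(x,y)$ is exactly the Bregman gap $f(x)-f(y)-\langle x-y,2Jy\rangle$ of $f=\|\cdot\|^2$ (using $\langle y,Jy\rangle=\|y\|^2$), so a uniform strong-convexity inequality for $\|\cdot\|^2$ plus the difference-quotient limit furnished by smoothness gives $\phi(x,y)\geq\kappa\|x-y\|^2$, and the limit computation and rearrangement you describe are sound (including at $y=0$, where $Jy=0$ and $\phi(x,0)=\|x\|^2$).

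The gap is in the homogenization step, which is not a side issue but the entire analytic content of the lemma. First, the specific device you propose --- rescale by the quadratic mean $M=\bigl(\tfrac12(\|x\|^2+\|y\|^2)\bigr)^{1/2}$ and ``apply the definition of $\delta_E$ to the rescaled pair'' --- fails as stated: for $y=0$ one gets $\|x/M\|=\sqrt{2}>1$, so the rescaled vectors need not lie in the unit ball and $\delta_E$ says nothing about them. Second, the fallback you name, the two-point inequality $2\|x\|^2+2\|y\|^2\geq\|x+y\|^2+c\|x-y\|^2$, does \emph{not} follow from $\delta_E(\varepsilon)\geq C\varepsilon^2$ ``by a scaling argument'': homogeneity only lets you normalize $\max\{\|x\|,\|y\|\}=1$, after which the vector of smaller norm $t\leq 1$ may be far from the sphere, and the crude bound $\|x+y\|\leq 2-2C\|x-y\|^2$ yields $\|x+y\|^2+c\|x-y\|^2\leq 4$, which is weaker than the required $2+2t^2$ when $t<1$. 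A correct derivation needs a genuine case split --- e.g.\ if $1-t\geq\tfrac12\|x-y\|$ the norm gap alone supplies the quadratic term since $2+2t^2-(1+t)^2=(1-t)^2$, while otherwise one radially projects $y$ onto the unit sphere, applies $\delta_E$ to the projected pair, and controls the error $1-t$ --- or else an appeal to the known equivalence (Xu's inequality characterizing $2$-uniform convexity, or Zalinescu's results on uniformly convex functions). With that lemma either proved by the case analysis or cited, the rest of your argument (dyadic iteration to general $\lambda$, passage to the G\^{a}teaux derivative, and $\mu=\max\{1,1/\kappa\}$) goes through.
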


\begin{lemma}\label{main inequa1}
  Let $E$ be a real smooth and $2$-uniformly convex Banach space. Then, there exists \(\mu \geq 1\) such that
\[\frac{1}{2\mu}\| x-y\| ^{2} \leq \langle x-y, Jx-Jy \rangle,~~ \forall x, y \in E .\]
\end{lemma}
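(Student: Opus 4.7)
The plan is to combine the two preceding lemmas in a one-line argument; there is no significant obstacle here since everything has been set up.

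First I would invoke Lemma \ref{phiinequ1} to obtain the constant $\mu \geq 1$ with $\frac{1}{\mu}\|x-y\|^2 \leq \phi(x,y)$. Since $\|x-y\| = \|y-x\|$, applying the same lemma with the roles of $x$ and $y$ interchanged also gives $\frac{1}{\mu}\|x-y\|^2 \leq \phi(y,x)$. Adding these two bounds yields
\[
\tfrac{2}{\mu}\|x-y\|^2 \;\leq\; \phi(x,y) + \phi(y,x).
\]

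Then I would apply the Bregman-distance identity of Lemma \ref{phiequ} to the right-hand side, which rewrites the sum $\phi(x,y)+\phi(y,x)$ as $2\langle x-y, Jx-Jy\rangle$. Dividing by $2$ produces
\[
\tfrac{1}{\mu}\|x-y\|^2 \;\leq\; \langle x-y, Jx-Jy\rangle,
\]
which is in fact stronger than the claimed bound $\tfrac{1}{2\mu}\|x-y\|^2 \leq \langle x-y, Jx-Jy\rangle$, so the lemma follows. (Alternatively, if one prefers to track the constant exactly as stated, one can simply note that $\phi(y,x) \geq (\|x\|-\|y\|)^2 \geq 0$, so from Lemma \ref{phiinequ1} alone we get $\frac{1}{\mu}\|x-y\|^2 \leq \phi(x,y) \leq \phi(x,y)+\phi(y,x) = 2\langle x-y, Jx-Jy\rangle$, yielding the factor $\tfrac{1}{2\mu}$ directly.)

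The only subtle point to mention is why the constant $\mu$ produced by Lemma \ref{phiinequ1} is allowed to be the \emph{same} on both sides of the symmetrization; this is immediate because $\mu$ depends only on the geometry of $E$ and not on the particular pair $(x,y)$, so no new constant needs to be introduced when we swap $x$ and $y$. Consequently the entire argument is a direct chaining of the two previous lemmas, with no nontrivial step.
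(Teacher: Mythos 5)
Your proposal is correct and follows essentially the same route as the paper: both chain Lemma \ref{phiequ} with Lemma \ref{phiinequ1}, and your parenthetical alternative (dropping $\phi(y,x)\geq 0$) is literally the paper's argument. Your main version, which symmetrizes by applying Lemma \ref{phiinequ1} to both $\phi(x,y)$ and $\phi(y,x)$, even yields the sharper constant $\tfrac{1}{\mu}$ in place of $\tfrac{1}{2\mu}$, which of course still implies the stated bound.
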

\begin{proof}
By Lemma \ref{phiequ}, we have
\[
\langle x - y, Jx - Jy \rangle = \frac{1}{2}\phi(x,y) + \frac{1}{2}\phi(y,x) \geq \frac{1}{2}\phi(x,y),
\]
since \( \phi \geq 0 \).

By Lemma \ref{phiinequ1}, there exists \( \mu \geq 1 \) such that \( \phi(x,y) \geq \frac{1}{\mu}\|x - y\|^2 \). Combining these gives
\[
\langle x - y, Jx - Jy \rangle \geq \frac{1}{2\mu}\|x - y\|^2.
\]
\end{proof}

Kohsaka and Takahashi \cite{KT} proved the following result, which shows that the resolvent \( J_{r} = (J + rA)^{-1}J \) of \( A \) is of firmly nonexpansive type.
\begin{lemma}\cite{KT}\label{risfnt}
Let \( E \) be a smooth, strictly convex and reflexive Banach space, let \( C \) be a nonempty closed convex subset of \( E \), and let \( A \subset E \times E^* \) be a monotone operator satisfying $D(A) \subset C \subset J^{-1}R(J + rA)$ for all \( r > 0 \). Let \( r \) be a positive real number and let \( J_r x = (J + rA)^{-1} Jx \) for all \( x \in C \). Then \( J_r : C \to D(A) \) is of firmly nonexpansive type.
\begin{align}
  \langle J_r x - J_r y, J J_r x - J J_r y \rangle \leq \langle J_r x - J_r y, Jx - Jy \rangle.
\end{align}
\end{lemma}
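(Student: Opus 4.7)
The plan is to unpack the definition of the resolvent $J_r = (J+rA)^{-1}J$ and then exploit monotonicity of $A$. First, I would fix $x,y\in C$ and use the range condition $C\subset J^{-1}R(J+rA)$ to locate points $u,v\in D(A)$ with $Jx\in(J+rA)u$ and $Jy\in(J+rA)v$. Writing this out, there exist $u^{*}\in Au$ and $v^{*}\in Av$ such that
\[
Jx = Ju + ru^{*},\qquad Jy = Jv + rv^{*},
\]
so that $u^{*}=\frac{1}{r}(Jx-Ju)$ and $v^{*}=\frac{1}{r}(Jy-Jv)$, and by construction $u=J_{r}x$, $v=J_{r}y$.

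Before anything else I would verify that $J_{r}$ is single-valued, since otherwise the statement of the lemma is not even well-posed. If $u_1,u_2$ both satisfied $Jx\in(J+rA)u_i$, then $\frac{1}{r}(Jx-Ju_i)\in Au_i$, and monotonicity of $A$ would give $\langle u_1-u_2, Ju_2-Ju_1\rangle\geq 0$, that is, $\langle u_1-u_2, Ju_1-Ju_2\rangle\leq 0$. Because $E$ is smooth and strictly convex, the duality mapping $J$ is strictly monotone, forcing $u_1=u_2$. This is the step that actually uses the strict convexity and smoothness assumptions on $E$; once it is disposed of, the rest is algebra.

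Next I would apply the monotonicity of $A$ directly to the two pairs $\bigl(u,\frac{1}{r}(Jx-Ju)\bigr)$ and $\bigl(v,\frac{1}{r}(Jy-Jv)\bigr)$ in $A$. This yields
\[
\Bigl\langle u-v,\,\tfrac{1}{r}(Jx-Ju)-\tfrac{1}{r}(Jy-Jv)\Bigr\rangle\geq 0,
\]
and after clearing $r>0$ and regrouping the duality terms the inequality becomes
\[
\langle u-v,\,Ju-Jv\rangle \leq \langle u-v,\,Jx-Jy\rangle.
\]
Substituting back $u=J_{r}x$ and $v=J_{r}y$ gives the desired firmly nonexpansive-type inequality.

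The main conceptual obstacle is really only the well-definedness of $J_{r}$; the firmly nonexpansive-type estimate itself then drops out of a single application of the monotone inequality, with the duality mapping $J$ playing no active role beyond being the tool that identifies the element of $Au$ one should pair against. No further geometric hypotheses on $E$ (uniform convexity, uniform smoothness, etc.) are needed at this stage, which is consistent with the statement of the lemma.
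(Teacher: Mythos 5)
Your proof is correct and is essentially the standard argument of Kohsaka and Takahashi, which the paper itself does not reproduce but simply cites: one represents $J_r x$ and $J_r y$ via the range condition, applies the monotonicity of $A$ once to the pairs $\bigl(J_r x,\tfrac{1}{r}(Jx-JJ_r x)\bigr)$ and $\bigl(J_r y,\tfrac{1}{r}(Jy-JJ_r y)\bigr)$, and clears $r$. Your preliminary single-valuedness check via the strict monotonicity of $J$ on a smooth, strictly convex space is also the standard one, so there is nothing to add.
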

\section{Main results}

In this section, we establish the H\"{o}lder property of mappings of firmly nonexpansive type and then apply it to the resolvent of maximal monotone operators in Banach spaces. First, we derive an upper bound for \( \|Jx - Jy\| \) by using the modulus of smoothness.
\begin{lemma}\label{keylem 1}
  Let $E$ be a uniformly smooth Banach space. Then
  \begin{align}
    \| J(x) - J(y) \| \leq 2\max\{ \| x \|, \| y \| \}  \frac{\rho_E \left( 2 \left\| \frac{x}{\| x \|} - \frac{y}{\| y \|} \right\| \right)}{\left\| \frac{x}{\| x \|} - \frac{y}{\| y \|} \right\|}
  \end{align}
for every $x,y\in E$, $x \neq y$, $x \neq 0$, and $y \neq 0$.
\end{lemma}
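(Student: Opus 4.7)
The plan is to bound $\|J(x) - J(y)\|$ via the dual-norm identity $\|J(x) - J(y)\| = \sup_{\|z\| \leq 1}\langle J(x) - J(y), z\rangle$ and to control each inner product by combining a tangent-line (slope) inequality with a midpoint application of the modulus of smoothness.

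For a fixed unit vector $z$ and any $\tau > 0$, I would extract slope bounds as follows. From $\langle J(x), x\rangle = \|x\|^2$ and $\langle J(x), w\rangle \leq \|x\|\|w\|$ applied to $w = x + \tau\|x\| z = \|x\|(u + \tau z)$, where $u := x/\|x\|$, I get $\langle J(x), z\rangle \leq \|x\|(\|u + \tau z\| - 1)/\tau$; the analogous argument with $y$ and $-z$ in place of $x$ and $z$ yields $-\langle J(y), z\rangle \leq \|y\|(\|v - \tau z\| - 1)/\tau$ with $v := y/\|y\|$. Summing and invoking $\|x\|, \|y\| \leq M := \max\{\|x\|, \|y\|\}$ gives
$$\langle J(x) - J(y), z\rangle \leq \frac{M\bigl(\|u + \tau z\| + \|v - \tau z\| - 2\bigr)}{\tau}.$$

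Next I would bound $\|u + \tau z\| + \|v - \tau z\|$ via a symmetric decomposition. Setting $a := (u+v)/2$ (so $\|a\| \leq 1$) and $b := (u-v)/2$, the identities $u + \tau z = a + (b + \tau z)$ and $v - \tau z = a - (b + \tau z)$, together with the definition of $\rho_E$ in its equivalent form valid for vectors of norm $\leq 1$ (applied by taking the scaling parameter to be $\|b + \tau z\|$), produce
$$\|u + \tau z\| + \|v - \tau z\| \leq 2 + 2\rho_E\!\left(\tfrac{\|u-v\|}{2} + \tau\right),$$
where the estimate $\|b + \tau z\| \leq \|u-v\|/2 + \tau$ and monotonicity of $\rho_E$ have been used.

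Substituting this into the slope estimate, choosing $\tau = \|u - v\|$ so that the argument of $\rho_E$ becomes $\tfrac{3}{2}\|u - v\| \leq 2\|u-v\|$, and invoking monotonicity once more produces $\langle J(x) - J(y), z\rangle \leq 2M\,\rho_E(2\|u-v\|)/\|u-v\|$; taking the supremum over $z$ finishes the proof. I expect the main technical hurdle to be the termwise comparison $R(\|u + \tau z\| - 1) \leq M(\|u + \tau z\| - 1)$ in the summed slope bound: this replacement is innocuous when the bracket is nonnegative but needs separate treatment when it has the wrong sign, either by restricting to the maximizer of $\langle J(x) - J(y), \cdot\rangle$ over the unit ball (for which strict convexity of $E^{*}$ forces favorable signs on each slope term) or by choosing $\tau$ large enough to guarantee the brackets are nonnegative before summing.
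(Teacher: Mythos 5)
Your proposal follows essentially the same route as the paper: both arguments combine the slope (subgradient) inequality for the norm at the normalized points $u=x/\|x\|$, $v=y/\|y\|$ with a single application of the modulus of smoothness, and your parametrization ($\|z\|\le 1$ together with the choice $\tau=\|u-v\|$) is just a rescaling of the paper's test vector $z$ with $\|z\|=\|u-v\|$. The one genuine difference is the symmetrization. The paper recenters both norm terms at $u$, at the cost of a second application of the subgradient inequality together with the observation $\langle j(u),u-v\rangle\ge 0$, and feeds $\rho_E$ the increment $v-u+z$, of norm at most $2\|u-v\|$. You recenter at the midpoint $a=(u+v)/2$ and feed $\rho_E$ the increment $b+\tau z$, of norm at most $\tfrac{3}{2}\|u-v\|$, using the form of the definition of $\rho_E$ taken over $\|x\|\le 1$ (which the paper's definition explicitly permits). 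Your version is marginally cleaner and yields a slightly better constant before relaxing to the stated bound; otherwise the two proofs coincide.

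The hurdle you flag at the end --- replacing the weights $\|x\|,\|y\|$ by $M=\max\{\|x\|,\|y\|\}$ when one of the brackets may be negative --- is a real issue, but the paper's own proof makes exactly the same move silently, in the line bounding $\|y\|(\cdots)+\|x\|(\cdots)$ by $\max\{\|x\|,\|y\|\}\,2\rho_E(\cdots)$, so you are not behind the paper here. Of your two suggested remedies, the second cannot work: forcing $\|u+\tau z\|-1\ge 0$ for every unit vector $z$ requires $\tau\ge 2$, and then the argument of $\rho_E$ is bounded below by a constant, which destroys the $\rho_E(2\|u-v\|)/\|u-v\|$ form of the bound. The first (favorable signs at the maximizer) is not obviously true either. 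A patch that does work: assuming without loss of generality $M=\|y\|\ge\|x\|$, write $\|x\|A+\|y\|B=M(A+B)-(M-\|x\|)A$ and use $|A|=\bigl|\,\|u+\tau z\|-1\,\bigr|\le\tau\|z\|\le\tau$ together with $M-\|x\|\le\|x-y\|$; the replacement then costs at most an additive $\tau\|x-y\|$, so after dividing by $\tau$ one obtains the stated bound plus an extra additive term $\|x-y\|$. That extra term is harmless for the application in Theorem 3.2 (it is absorbed into the constant $L$ on a bounded set), but it means the lemma as literally stated needs either this modification or a further argument --- a remark that applies equally to the paper's own proof.
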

\begin{proof}
The norm is convex function, and its derivative at \( x \) is \( j(x) \), hence
\begin{align}\label{j inequal}
\langle j(u), v \rangle \leq \| u + v \| - \| u \|
\end{align}
for every \( u \neq 0 \) and for every \( v \).

Fix $x,y \in E$, and $x \neq 0, y \neq 0$. Let $z\in E$ be any vector with $\|z\| = \big\| \frac{x}{\|x\|} - \frac{y}{\|y\|}\big\|$. Then

\[
\begin{aligned}
\allowdisplaybreaks
&\langle J(y), z \rangle - \langle J(x), z \rangle \\
&= \| y \| \left\langle J\left( \frac{y}{\| y \|} \right), z \right\rangle - \| x \| \left\langle J\left( \frac{x}{\| x \|} \right), z \right\rangle \\
&= \| y \| \left\langle j\left( \frac{y}{\| y \|} \right), z \right\rangle - \| x \| \left\langle j\left( \frac{x}{\| x \|} \right), z \right\rangle \\
&\leq \| y \| \left( \left\| \frac{y}{\| y \|} + z \right\| - 1 \right) + \| x \| \left\langle j\left( \frac{x}{\| x \|} \right), \frac{x}{\| x \|} - \frac{y}{\| y \|} - z \right\rangle (\mathrm{by}\, (\ref{j inequal}))\\
&\leq \| y \| \left( \left\| \frac{y}{\| y \|} + z \right\| - 1 \right) + \| x \| \left( \left\| \frac{2x}{\| x \|} - \frac{y}{\| y \|} - z \right\| - 1 \right) (\mathrm{by}\, (\ref{j inequal}))\\
&= \| y \| \left( \left\| \frac{x}{\| x \|} + \left( \frac{y}{\| y \|} - \frac{x}{\| x \|} + z \right) \right\| - 1 \right) \\
&\quad + \| x \| \left( \left\| \frac{x}{\| x \|} - \left( \frac{y}{\| y \|} - \frac{x}{\| x \|} + z \right) \right\| - 1 \right) \\
&\leq \max\{ \| x \|, \| y \| \} 2\rho_E \left( \left\| \frac{y}{\| y \|} - \frac{x}{\| x \|} + z \right\| \right) (\mathrm{by\, modulus\, of\, smoothness} )\\
&\leq 2\max\{ \| x \|, \| y \| \} \rho_E \left( 2 \left\| \frac{x}{\| x \|} - \frac{y}{\| y \|} \right\| \right)
\end{aligned}
\]
because \( \| z \| = \left\| \frac{x}{\| x \|} - \frac{y}{\| y \|} \right\| \).

Since $z\in E$ is arbitrary,
\[
\| J(x) - J(y) \| \leq 2\max\{ \| x \|, \| y \| \}  \frac{\rho_E \left( 2 \left\| \frac{x}{\| x \|} - \frac{y}{\| y \|} \right\| \right)}{\left\| \frac{x}{\| x \|} - \frac{y}{\| y \|} \right\|}.
\]
\end{proof}

We now use Lemma \ref{keylem 1} to establish an upper bound for \(\|Jx - Jy\|\) in $q$-uniformly smooth Banach spaces, where the bound is given in terms of a relatively simple function of \(\|x - y\|\).
\begin{theorem}\label{main 1}
  Let $E$ be a uniformly smooth Banach space whose modulus of smoothness satisfies $\rho_{E}(\tau) \leq K\tau^q$ for some $1< q \leq 2$. Then, there is a constant $C$ such that
  \begin{align}\label{J inequa3}
    \| J(x) - J(y) \| \leq M  \| x - y \|^{q - 1} \max\{ \| x \|, \| y \| \}^{2 - q}.
  \end{align}
In particular, if \( E \) is a 2-uniformly smooth Banach space, then its duality mapping \( J \) is Lipschitz continuous.
\end{theorem}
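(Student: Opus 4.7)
The plan is to use Lemma \ref{keylem 1} and reduce everything to a bound on the angular distance $\bigl\|\tfrac{x}{\|x\|}-\tfrac{y}{\|y\|}\bigr\|$ by $\|x-y\|/\max\{\|x\|,\|y\|\}$. Substituting the power-type hypothesis $\rho_E(\tau)\le K\tau^q$ into Lemma \ref{keylem 1} collapses the $\rho_E(2\|\cdot\|)/\|\cdot\|$ factor into a constant times $\|\cdot\|^{q-1}$, so that
\[
\|Jx-Jy\| \;\le\; K\,2^{q+1}\,\max\{\|x\|,\|y\|\}\,\Bigl\|\tfrac{x}{\|x\|}-\tfrac{y}{\|y\|}\Bigr\|^{q-1}
\]
for nonzero $x,y$ with $x\neq y$. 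Everything then hinges on controlling the final factor.

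For this I would start from the identity
\[
\frac{x}{\|x\|}-\frac{y}{\|y\|} \;=\; \frac{\|y\|\,(x-y) + (\|y\|-\|x\|)\,y}{\|x\|\,\|y\|}
\]
and apply the triangle inequality together with the reverse triangle inequality $\bigl|\|x\|-\|y\|\bigr|\le\|x-y\|$ to reach $\bigl\|\tfrac{x}{\|x\|}-\tfrac{y}{\|y\|}\bigr\|\le 2\|x-y\|/\|x\|$. Swapping $x$ and $y$ in the same calculation gives the analogous bound $2\|x-y\|/\|y\|$, and taking the smaller of the two yields
\[
\Bigl\|\tfrac{x}{\|x\|}-\tfrac{y}{\|y\|}\Bigr\| \;\le\; \frac{2\|x-y\|}{\max\{\|x\|,\|y\|\}}.
\]
The main obstacle I expect is realising that the natural algebraic decomposition carries $\|x\|$ (or $\|y\|$) in the denominator, and that a symmetrisation step, rather than a case split on the ratio $\|x\|/\|y\|$, is what directly produces the $\max$ demanded by \mbox{(\ref{J inequa3})}.

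Substituting the angular-distance estimate back into the earlier display then yields (\ref{J inequa3}) with $M=K\cdot 4^q$ for all nonzero $x,y$. The degenerate case $x=0$ or $y=0$ must be treated by hand; for instance if $y=0$ then $\|Jx-Jy\|=\|x\|$ while the right-hand side reduces to $M\|x\|$, so enlarging $M$ to $\max\{K\cdot 4^q,1\}$ covers it. For the in-particular assertion, setting $q=2$ makes the factor $\max\{\|x\|,\|y\|\}^{2-q}$ identically $1$, leaving the global Lipschitz estimate $\|Jx-Jy\|\le M\|x-y\|$ claimed in the theorem.
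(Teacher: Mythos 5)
Your proposal is correct and follows essentially the same route as the paper's proof: apply Lemma \ref{keylem 1}, insert the bound $\rho_E(\tau)\le K\tau^q$, and control $\bigl\|\tfrac{x}{\|x\|}-\tfrac{y}{\|y\|}\bigr\|$ by $2\|x-y\|/\max\{\|x\|,\|y\|\}$ via the triangle and reverse triangle inequalities (your explicit identity is just a rewriting of the paper's add-and-subtract of $y/\|x\|$), arriving at the same constant $M=2^{2q}K$. If anything, you are more careful than the paper in verifying the degenerate case $y=0$ explicitly.
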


\begin{proof}
For the case of  $x= y$ or $x= 0$ or $y= 0$, it is easy to see that the inequality (\ref{J inequa3}) holds. Next, assume that $x \neq y$, $x \neq 0$, and $y \neq 0$, we have
\begin{align*}
\left\| \frac{x}{\| x \|} - \frac{y}{\| y \|} \right\|
&\leq \left\| \frac{x}{\| x \|} - \frac{y}{\| x \|} \right\| + \left\| \frac{y}{\| x \|} - \frac{y}{\| y \|} \right\| \\
&\leq \frac{\| x - y \|}{\| x \|} + \frac{\big| \| x \| - \| y \| \big |}{\| x \|}\\
&\leq \frac{2 \| x - y \|}{\| x \|}.
\end{align*}

Similarly, \( \left\| \frac{x}{\| x \|} - \frac{y}{\| y \|} \right\| \leq \frac{2 \| x - y \|}{\| y \|} \), hence
\begin{align}\label{keyinequ 2}
   \left\| \frac{x}{\| x \|} - \frac{y}{\| y \|} \right\| \leq \frac{2 \| x - y \|}{\max\{ \| x \|, \| y \| \}}.
\end{align}

From Lemma \ref{keylem 1}, and the inequality (\ref{keyinequ 2}), we get
\begin{align*}
\| J(x) - J(y) \| &\leq 2\max\{ \| x \|, \| y \| \}  \frac{\rho_E \left( 2 \left\| \frac{x}{\| x \|} - \frac{y}{\| y \|} \right\| \right)}{\left\| \frac{x}{\| x \|} - \frac{y}{\| y \|} \right\|}\\
&\leq 2\max\{ \| x \|, \| y \| \}  2^q K  \left\| \frac{x}{\| x \|} - \frac{y}{\| y \|} \right\|^{q - 1}\\
&\leq 2^{q+1} K \max\{ \| x \|, \| y \| \}  \left( \frac{2 \| x - y \|}{\max\{ \| x \|, \| y \| \}} \right)^{q - 1}\\
&\leq M \| x - y \|^{q - 1} \max\{ \| x \|, \| y \| \}^{2 - q},
\end{align*}
where $M = 2^{2q}K$.
\end{proof}
\begin{remark}
 Kim and Xu \cite{KX} proved that a Banach space \( E \) is a Hilbert space if and only if its duality mapping \( J \) is Lipschitz continuous with Lipschitz constant 1. By Theorem \ref{main 1}, if a Banach space \( E \) is 2-uniformly smooth, then its duality mapping \( J \) is Lipschitz continuous.
\end{remark}



We now present a theorem concerning H\"{o}lder property of mappings of firmly nonexpansive type.
\begin{theorem}\label{main thm}
  Let $E$ be a $q$-uniformly smooth and $2$-uniformly convex Banach space, $1< q \leq 2$. Let \( C \) be a nonempty closed convex subset of \( E \), \( T \) is of firmly nonexpansive type on $C$. Then for any \( R > 0 \) and any \( x, y \in E \) such that \( \|x\| \leq R\), \( \|y\| \leq R\), then there is a constant $L$ such that the following inequality holds:
  \begin{align*}
    \|Tx - Ty\| \leq L \|x-y\|^{q-1}.
  \end{align*}
\end{theorem}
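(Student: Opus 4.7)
The plan is to combine three ingredients already established in the paper: the defining inequality of firmly nonexpansive type, the 2-uniform convexity lower bound of Lemma \ref{main inequa1}, and the smoothness upper bound of Theorem \ref{main 1}. First, starting from inequality (\ref{fnt}) for $T$ at the points $x,y$ and applying the duality pairing estimate $\langle u, u^{*}\rangle \leq \|u\|\,\|u^{*}\|$ on the right-hand side, I get
$$\langle Tx - Ty,\, JTx - JTy\rangle \leq \langle Tx - Ty,\, Jx - Jy\rangle \leq \|Tx - Ty\|\,\|Jx - Jy\|.$$

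Next, since $E$ is smooth and $2$-uniformly convex, I apply Lemma \ref{main inequa1} at the points $Tx, Ty \in E$ to obtain
$$\tfrac{1}{2\mu}\|Tx - Ty\|^{2} \leq \langle Tx - Ty,\, JTx - JTy\rangle.$$
Chaining this with the previous display (and treating the trivial case $Tx = Ty$ separately before dividing) gives the clean intermediate estimate $\|Tx - Ty\| \leq 2\mu\,\|Jx - Jy\|$. This is the same qualitative relation as (\ref{JT inequ}), but the point is that the right-hand side will now be controlled by a power of $\|x-y\|$ rather than left as $\|Jx-Jy\|$.

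Then I invoke Theorem \ref{main 1}, which produces a constant $M$ with
$$\|Jx - Jy\| \leq M\,\|x - y\|^{q-1}\max\{\|x\|,\|y\|\}^{2-q}.$$
Since $1 < q \leq 2$, the exponent $2 - q$ is nonnegative, so the assumption $\|x\|,\|y\| \leq R$ lets me dominate $\max\{\|x\|,\|y\|\}^{2-q}$ by $R^{2-q}$. Setting $L = 2\mu M R^{2-q}$ yields $\|Tx - Ty\| \leq L\,\|x - y\|^{q-1}$, which is the asserted H\"{o}lder estimate.

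No genuine obstacle is expected: all the analytic work has been done in Lemma \ref{main inequa1} and Theorem \ref{main 1}, and the proof is a short assembly of these pieces. The only points requiring minor care are separating the degenerate case $Tx = Ty$ before cancelling the common factor $\|Tx - Ty\|$, and verifying that $2 - q \geq 0$ so that the $R$-dependence is actually absorbed into a constant rather than appearing as a negative power (which would require a lower bound on the norms). Note that the bound depends on $R$ through the factor $R^{2-q}$, so $T$ is locally H\"{o}lder continuous with exponent $q-1$ on every ball, rather than globally H\"{o}lder on all of $E$.
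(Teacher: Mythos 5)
Your proposal is correct and follows essentially the same route as the paper's own proof: the firmly-nonexpansive-type inequality plus Cauchy--Schwarz, the lower bound from Lemma \ref{main inequa1} applied at $Tx, Ty$, cancellation of the common factor $\|Tx-Ty\|$ (with the degenerate case handled separately), and finally Theorem \ref{main 1} with $\max\{\|x\|,\|y\|\}^{2-q}\leq R^{2-q}$. The only difference is cosmetic bookkeeping of the constant ($2\mu$ versus the paper's relabeled $\mu$).
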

\begin{proof}
Recall that a mapping \( T: C \to E \) is said to be firmly nonexpansive type if for all \( x, y \in C \),
\begin{align}\label{fnt2}
\langle Tx - Ty, JTx - JTy \rangle \leq \langle Tx - Ty, Jx - Jy \rangle.
\end{align}

Since \( E \) is a \( q \)-uniformly smooth and 2-uniformly convex Banach space, Lemma \ref{main inequa1} guarantees the existence of a constant \( \mu \geq 2 \) such that
\begin{align}\label{JT2}
\langle Tx - Ty, JTx - JTy \rangle \geq \frac{1}{\mu} \|Tx - Ty\|^2.
\end{align}

Moreover, by the Cauchy-Schwarz inequality, we have
\[
\langle Tx - Ty, Jx - Jy \rangle \leq \|Tx - Ty\| \|Jx - Jy\|.
\]
Combining this with inequalities (\ref{fnt2}) and (\ref{JT2}), we substitute the inner product bound into (\ref{fnt2}) and divide both sides by \( \|Tx - Ty\| \) (for \( Tx \neq Ty \); the case \( Tx = Ty \) is trivial) to derive:
\[
\frac{1}{\mu} \|Tx - Ty\| \leq \|Jx - Jy\|.
\]

Finally, for all \( x, y \in C \) with \( \|x\| \leq R \) and \( \|y\| \leq R \), Theorem \ref{main 1} implies
\[
\|Jx - Jy\| \leq MR^{2-q} \|x - y\|^{q-1}.
\]
Substituting this into the inequality above yields the desired result, where $L=\mu MR^{2-q}$. This completes the proof.
\end{proof}

By Lemma \ref{risfnt}, Theorem \ref{main thm} holds for the resolvent \( J_{r}\).

\begin{theorem}\label{main thm2}
Let \( E \) be a $q$-uniformly smooth and $2$-uniformly convex Banach space, $1< q \leq 2$. Let \( C \) be a nonempty closed convex subset of \( E \), and let \( A \subset E \times E^* \) be a monotone operator satisfying $D(A) \subset C \subset J^{-1}R(J + rA)$ for all \( r > 0 \). Let \( r \) be a positive real number and let \( J_r x = (J + rA)^{-1} Jx \) for all \( x \in C \). Then for any \( R > 0 \) and any \( x, y \in E \) such that \( \|x\| \leq R\), \( \|y\| \leq R\), then there is a constant $L$ such that the following inequality holds:
  \begin{align*}
    \|J_{r}x - J_{r}y\| \leq L \|x-y\|^{q-1}.
  \end{align*}
\end{theorem}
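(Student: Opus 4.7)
The plan is to obtain Theorem \ref{main thm2} as a direct corollary of the preceding results, with essentially no new work required beyond a verification that the hypotheses line up. First I would observe that the structural assumptions on $E$ carry over: since $E$ is assumed to be $q$-uniformly smooth it is in particular uniformly smooth and hence smooth, and since it is $2$-uniformly convex it is uniformly convex, which makes it both strictly convex and reflexive. Therefore every hypothesis on the ambient space required by Lemma \ref{risfnt} is automatically satisfied, along with the monotonicity/range condition $D(A)\subset C\subset J^{-1}R(J+rA)$ which is assumed verbatim.

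Next I would invoke Lemma \ref{risfnt} to conclude that the resolvent $J_r=(J+rA)^{-1}J:C\to D(A)$ is of firmly nonexpansive type, that is,
\[
\langle J_rx-J_ry,\,JJ_rx-JJ_ry\rangle \leq \langle J_rx-J_ry,\,Jx-Jy\rangle,\qquad \forall x,y\in C.
\]
At this point $J_r$ is a concrete mapping of firmly nonexpansive type on the closed convex set $C\subset E$, so it fits exactly the setting of Theorem \ref{main thm}.

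Finally I would apply Theorem \ref{main thm} with $T:=J_r$. For any $R>0$ and any $x,y\in E$ with $\|x\|\le R$ and $\|y\|\le R$, Theorem \ref{main thm} yields a constant $L>0$ (explicitly of the form $L=\mu M R^{2-q}$, where $\mu$ comes from Lemma \ref{main inequa1} and $M=2^{2q}K$ from Theorem \ref{main 1}) such that
\[
\|J_rx-J_ry\|\leq L\|x-y\|^{q-1},
\]
which is the desired H\"{o}lder estimate.

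There is essentially no obstacle here, since the theorem is a packaging of Lemma \ref{risfnt} and Theorem \ref{main thm}. The only subtlety worth pointing out in the write-up is that the constant $L$ depends on $R$ (through the factor $R^{2-q}$), so the estimate is local in the sense of being uniform on bounded subsets of $E$, and that the exponent $q-1\in(0,1]$ reflects the smoothness index $q$ of $E$, matching the Lipschitz case when $q=2$.
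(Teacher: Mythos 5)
Your proposal is correct and takes exactly the route the paper does: the paper dispenses with Theorem \ref{main thm2} in a single sentence, noting that by Lemma \ref{risfnt} the resolvent $J_r$ is of firmly nonexpansive type and hence Theorem \ref{main thm} applies with $T=J_r$. Your additional check that $q$-uniform smoothness and $2$-uniform convexity supply the smoothness, strict convexity, and reflexivity required by Lemma \ref{risfnt} is a worthwhile detail that the paper leaves implicit.
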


\begin{remark}
  If $q =2$, then $J$ is Lipschitz continuous. Therefore, the assumptions  $\|x\| \leq R$ and $\|y\| \leq R$ in Theorem \ref{main thm} and \ref{main thm2} can be omitted. It should be pointed out that $2$-uniformly smooth and $2$-uniformly convex Banach spaces are necessarily isomorphic to Hilbert spaces \cite{BB}.
\end{remark}


\bibliographystyle{amsalpha}

\begin{thebibliography}{AcBeRu}

\bibitem{AY} Y. Alber, Metric and generalized projection operators in Banach spaces: properties and applications. In: Theory and applications of nonlinear operators of accretive and monotone type. New York: Dekker; 1996. 15-50. (Lecture notes in pure and applied mathematics; vol. 178).

\bibitem{AK} K. Aoyama, F. Kohsaka, Strongly relatively nonexpansive sequences generated by firmly nonexpansive-like mappings. Fixed Point Theory Appl. 2014, 2014:95.

\bibitem{BB} B. Beauzamy, Introduction to Banach Spaces and Their Geometry. North-Holland, Amsterdam (1985).
    
\bibitem{BC} H.H. Bauschke, P.L. Combettes, Convex analysis and monotone operator theory in Hilbert spaces. Second edition. Springer, Cham, 2017. xix+619 pp.

\bibitem{KX} T.H. Kim, H.K. Xu, Some Hilbert space characterizations and Banach space inequalities. Math. Inequal. Appl. 1 (1998), 113-121.

\bibitem{KT2} S. Kamimura, W. Takahashi, Strong convergence of a proximal-type algorithm in a Banach space, SIAM J. Optim., 13 (2002), 938-945.

\bibitem{KT} F. Kohsaka, W. Takahashi, Existence and approximation of fixed points of firmly nonexpansive-type mappings in Banach spaces. SIAM J. Optim. 19 (2008), 824-835.

\bibitem{MRE} R.E. Megginson, An introduction to Banach space theory. Graduate Texts in Mathematics, 183. Springer-Verlag, New York, 1998. xx+596 pp.
\end{thebibliography}

\end{document}